\newcommand{\SL}{{\mathcal{L}}}
\newcommand{\SM}{{\mathcal{M}}}
\newcommand{\SO}{{\mathcal{O}}}
\newcommand{\SU}{{\mathcal{U}}}
\newcommand{\SP}{{\mathcal{P}}}
\newcommand{\SE}{{\mathcal{E}}}
\newcommand{\PP}{\mathbb{P}}
\newcommand{\ZZ}{\mathbb{Z}}
\newcommand{\CC}{\mathbb{C}}
\newcommand{\QQ}{\mathbb{Q}}
\newcommand{\Pic}{\operatorname{Pic}}
\newcommand{\id}{\operatorname{id}}
\newcommand{\surj}{\twoheadrightarrow}
\newcommand{\too}{\longrightarrow}
\newcommand{\rk}{\operatorname{rk}}
\newcommand{\wt}{\widetilde}
\newcommand{\Det}{\operatorname{Det}}
\newcommand{\Ext}{\operatorname{Ext}}
\newcommand{\ch}{\operatorname{ch}}
\newcommand{\codim}{\operatorname{codim}}
\newtheorem{proposition}{Proposition}[section]
\newtheorem{theorem}[proposition]{Theorem}
\newtheorem{lemma}[proposition]{Lemma}
\theoremstyle{definition}
\newtheorem{definition}[proposition]{Definition}
\numberwithin{equation}{section}
\begin{document}

\title[On vector bundles trivial over Hecke curves]{On vector bundles over
moduli spaces trivial on Hecke curves}

\author[I. Biswas]{Indranil Biswas}
\address{School of Mathematics, Tata Institute of Fundamental
Research, Homi Bhabha Road, Mumbai 400005, India}
\email{indranil@math.tifr.res.in}

\author[T. L. G\'omez]{Tom\'as L. G\'omez}
\address{Instituto de Ciencias Matem\'aticas (CSIC-UAM-UC3M-UCM),
Nicol\'as Cabrera 15, Campus Cantoblanco UAM, 28049 Madrid, Spain}
\email{tomas.gomez@icmat.es}

\subjclass[2010]{14H60, 14D20, 19L10}

\keywords{Moduli space, Hecke curve, semistability}

\date{}

\begin{abstract}
Let $M_X(r,\xi)$ be the moduli space of stable vector bundles, on a smooth complex projective 
curve $X$ with $\text{genus}(X)\, \geq\, 3$, of rank $r$ and fixed determinant $\xi$ such that $\deg(\xi)$
is coprime to $r$. 
If $E$ is a vector bundle $M_X(r,\xi)$ whose restriction to every Hecke curve in $M_X(r,\xi)$ 
is trivial, we prove that $E$ is trivial.
\end{abstract}

\maketitle

\section{Introduction}

Moduli spaces of vector bundles on a complex projective curve have a long history. Apart from
algebraic geometry, the context in which these moduli spaces were introduced, they also arise
in symplectic geometry, geometric representation theory, differential geometry and mathematical
physics. Line bundles and higher rank vector bundles on these moduli spaces play central
role in their study. On the other hand, these moduli spaces contain a distinguished class
of rational curves known as Hecke curves. They can be characterized as minimal degree rational
curves passing through a general point on the moduli spaces \cite{Ty} (also proved in
\cite{Su}). These Hecke curves play important role in the geometric
representation theoretic aspect of the moduli spaces and also in the computation of cohomology
of coherent sheaves on the moduli spaces.

Here we study restriction of vector bundles on moduli spaces to the Hecke curves. To describe
the result proved here, fix a smooth complex projective curve $X$ of genus at least three.
Let $\xi$ be a line bundle on $X$ with genus $g\,\geq\, 3$ and $r\, \geq\, 2$ an integer coprime to $\deg (\xi)$.
Let $M_X(r,\xi)$ be the moduli space of stable vector bundles on $X$ of rank $r$ and determinant
$\xi$. We prove the following (see Theorem \ref{thm1}):

\begin{theorem}\label{thm0}
Let $E$ be a vector bundle on $M_X(r,\xi)$ such that its restriction to every Hecke curve on 
$M_X(r,\xi)$ is trivial. Then $E$ is trivial.
\end{theorem}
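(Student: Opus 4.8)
The plan is to turn triviality on the one–dimensional Hecke curves into triviality on the $(r-1)$–dimensional subvarieties that they sweep out, descend the bundle across the Hecke correspondence, and then close the resulting recursion. Throughout write $n=\rk E$ and let $H$ denote the ample generator of $\Pic(M_X(r,\xi))\cong\ZZ$. First I would record two reductions that use only the restriction to Hecke curves together with the Picard number being one. Since $E|_C$ is trivial, $\deg(E|_C)=c_1(E)\cdot[C]=0$ for every Hecke curve $C$; as all Hecke curves lie in a single numerical class generating $N_1(M_X(r,\xi))\otimes\QQ$, this forces $c_1(E)=0$, hence $\det E\cong\SO$ because $\Pic$ has rank one. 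The same input gives $H$–semistability of $E$: a destabilizing subsheaf $S$ would satisfy $c_1(S)\cdot[C]=\deg(S|_C)\le 0$ on a general Hecke curve of the covering family, and since $[C]$ and $H^{\dim M-1}$ are proportional positive classes (Picard rank one) this yields $\mu_H(S)\le 0$.

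For the main step I would use the Hecke correspondence: fixing $x\in X$ and setting $M'=M_X(r,\xi(-x))$ (coprimality is preserved), the projectivized restriction to $\{x\}\times M'$ of a Poincar\'e bundle produces a variety $\mathcal H$ carrying two $\PP^{r-1}$–bundle structures $q\colon\mathcal H\too M_X(r,\xi)$ and $p\colon\mathcal H\too M'$, the Hecke curves being the $q$–images of the lines in the fibres of $p$. Because a vector bundle on $\PP^{r-1}$ whose restriction to every line is trivial is itself trivial, triviality of $E$ on all Hecke curves upgrades to triviality of $q^*E$ on every fibre of $p$; by cohomology and base change $E':=p_*q^*E$ is then locally free of rank $n$ and $q^*E\cong p^*E'$. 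Restricting this isomorphism to the fibres of $q$, on which $q^*E$ is trivial, and using the symmetric role of $p$, shows that $E'$ is again trivial on every Hecke curve of $M'$.

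The remaining task — and the point I expect to be the real obstacle — is to close this recursion, i.e.\ to extract genuine triviality rather than an endless descent $M_X(r,\xi)\rightsquigarrow M'\rightsquigarrow M''\rightsquigarrow\cdots$. Restriction to curves controls only $c_1$, so the difficulty is precisely the vanishing of the higher Chern classes (equivalently, of the Bogomolov discriminant). Two routes seem available. One is arithmetic: iterate the correspondence at a fixed $x$; after $r$ steps the twist by $\SO(x)$ identifies the endpoint with $M_X(r,\xi)$, and tracking the Poincar\'e bundle through the $r$ modifications should identify the composite descent with (a twist of) this isomorphism, forcing $E$ to reproduce itself and hence be trivial. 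The route I would actually pursue is differential–geometric: the canonical identifications of fibres furnished by the constant sections of $E$ along each Hecke curve vary algebraically and, since the Hecke directions span the tangent space at a general point, assemble into a holomorphic connection on $E$ over the open locus swept by the curves; flatness should be read off from the covering family of $\PP^{r-1}$'s, on each of which $E$ is trivial, and simple connectedness of $M_X(r,\xi)$ then makes the flat bundle trivial, after which the trivialization extends across the complement of codimension at least two. Establishing flatness and this extension rigorously, equivalently controlling $c_2$ and beyond, is where I expect the substantive work to lie.
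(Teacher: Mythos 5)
Your reductions are sound and match the paper's: triviality on Hecke curves plus $\Pic(M)=\ZZ$ gives $c_1(E)=0$ and semistability (for the latter you do need that a \emph{general} Hecke curve avoids the non-locally-free locus of a destabilizing subsheaf, which is a codimension-two avoidance lemma, but that is routine), and the endgame via Simpson's correspondence --- semistable with $c_1=c_2=0$ implies flat, plus simple connectedness of $M$ --- is exactly how the paper concludes. The use of the $\PP^{r-1}$-bundle structure of the Hecke correspondence to upgrade triviality on lines to triviality on the Hecke spaces is also a genuine ingredient of the paper.

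However, there is a real gap where you yourself flag it: the vanishing of $c_2(E)$ is never established, and neither of your two proposed routes works as stated. The differential-geometric route is essentially circular --- assembling the fibrewise trivializations along Hecke curves into a \emph{flat} connection is equivalent to the Chern-class vanishing you are trying to prove, and the paper explicitly notes that Hecke curves through a fixed point do not sweep out $M$, so the connection would at best be defined on a proper open set with no control on the complement. The iteration route only reproduces $E$ up to the descent construction and gives no handle on $c_2$. What the paper actually does is entirely different in character: it uses the known structure of $H^4(M,\QQ)$, generated by $(f_2)^2$, $b_2=c_2(\mathrm{End}(\SP))/[p]$ and $f_3$, writes $c_2(E)=\alpha(f_2)^2+\beta b_2+\gamma f_3$, and evaluates this against two explicitly constructed test families: (i) a two-point Hecke family parametrized by $\PP^{r-1}\times\PP^{r-1}$, on which $E$ pulls back trivially (by applying the lines criterion in each factor) and whose pullbacks of the generators are computed by an explicit Chern-class calculation, forcing $\alpha=\gamma=0$; and (ii) a moving-point Hecke family parametrized by a surface fibered in Hecke curves over a Galois cover of $X$, on which $c_2$ of the pullback vanishes by descent to the base curve while $b_2$ pulls back to a nonzero multiple of the generator, forcing $\beta=0$. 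Without these explicit cohomological computations --- which the introduction of the paper identifies as ``almost all of our work'' --- the argument does not close.
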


Our motivation to study this problem comes from the result that says that a
vector bundle on a projective space $\PP^N$ is trivial when the
restriction of it to every line is trivial (in fact, it is enough to check
it for lines through a fixed point, cf. \cite[p. 51, Theorem
3.2.1]{OSS}). In this article we are replacing $\PP^N$ by
$M_X(r,\xi)$, and lines in $\PP^N$ by Hecke curves in $M_X(r,\xi)$
(which are also rational curves of minimal degree). 

To prove Theorem \ref{thm0} we crucially use a theorem of Simpson which says that a semistable vector
bundle $W$ on a smooth complex projective variety admits a flat holomorphic connection if
$c_1(W)\,=\, 0\,=\, c_2(W)$. It is relatively straightforward to deduce that the vector bundle $E$ in
Theorem \ref{thm0} is semistable and $c_1(E)\,=\, 0$. Almost all of our work is devoted in proving
that $c_2(E)\,=\, 0$.

\section{Cohomology of moduli space}

Let $X$ be a smooth complex projective curve of genus $g$. The results
of this sections hold for $g\, \geq \, 2$.
Fix an integer $r\, \geq\, 2$ and also fix a line bundle $\xi$ on $X$ such that
$\deg(\xi)$ is coprime to $r$. Let $$M\,=\,M_X(r,\xi)$$ be
the moduli space of stable bundles on $X$ of rank $r$ and degree
$\deg(\xi)$. This moduli space $M$ is a smooth projective variety of
dimension $(r^2-1)(g-1)$. There is
a Poincar\'e bundle $\SP$ on $X\times M$; two different Poincar\'e bundles
on $X\times M$ differ by tensoring with a line bundle pulled back from $M$.
It is known that
\begin{equation}\label{pm}
\Pic(M)\,=\,\ZZ
\end{equation}
\cite[p.~69]{Ra}, \cite[p.~78, Proposition 3.4(ii)]{Ra}. The
ample generator of $\Pic(M)$
will be denoted by $\SO_M(1)$. The degree of any torsionfree coherent
sheaf $F$ on $M$ is defined to be
$$
\deg(F)\, :=\, \big(c_1(F)\cup c_1(\SO_M(1))^{(r^2-1)(g-1)-1}\big)\cap [M]\, \in\, \mathbb Z\, .
$$

Let $U$ be a rank $r$ vector bundle on $X\times T$ such that for every point
$t\, \in\, T$, the restriction
$U_t\,:=\, U|_{X\times t}$ is stable and has determinant
$\xi$. Let $$\phi\,:\,T\,\too\, M\,=\, M_X(r,\xi)$$ be the corresponding classifying morphism.
Define
$$
\Det U \,:=\, \big( \det(Rp^{}_T{}^{}_* U) \big)^{-1}\,:=\,
\big( \det(R^0p^{}_T{}^{}_* U) \big)^{-1}\otimes \big( \det(R^1p^{}_T{}^{}_* U) \big)^{-1}
\, \longrightarrow\, T\, ,
$$
where $p_T\, :\, X\times T\, \longrightarrow\, T$ is the natural
projection. In \cite[Proposition 2.1]{Na1} there is a formula to
calculate the degree of the classifying morphism $\phi$, which in our
case ($r$ and $d$ are coprime) becomes
\begin{equation}\label{eq:pullbacko1}
\phi^* \SO_M(1) \,=\, (\Det U)^r \otimes (\bigwedge\nolimits^r U_{p})^{d+r(1-g)}
\end{equation}
where $p\,\in\, X$ is any point, and $U_p\,=\,U\vert_{p\times M}$.
Applying this to the Poincar\'e bundle $U\,=\, \SP$, it follows that
\begin{equation}\label{eq:degmod}
(\deg \SP_p) \cdot d \,\equiv\, 1 \ \ \mod\ r
\end{equation}
(see \cite[p.~75, Remark 2.9]{Ra} and \cite[p.~75, Definition 2.10]{Ra}).
Using the slant product operation, construct the integral cohomology classes
\begin{equation}\label{cg}
f_2\,:=\,c_2(\SP)/[X]\, \in\, H^2(M,\, {\mathbb Z})\, ,\ \ a_2\,:=\,c_2(\SP)/[p]
\, \in\, H^4(M,\, {\mathbb Z})
\end{equation}
$$
\text{ and } \ \
f_3\,:=\, c_3(\SP)/[X]\, \in\, H^4(M,\, {\mathbb Z})\, ,
$$
where $[X]\,\in \, H_2(X,\, {\mathbb Z})$ and $[p]\,\in\, H_0(X, \,{\mathbb Z})$ are the 
positive generators.

The following result is standard.

\begin{proposition}\label{prop1}\mbox{}
\begin{itemize}
\item The integral cohomology of $M$ has no torsion.

\item The rank of $H^2(M,\, {\mathbb Z})$ is 1. The cohomology class $f_2$ in \eqref{cg}
generates the $\mathbb Q$--vector space $H^2(M,\, {\mathbb Q})$. 

\item For $r\, \geq\, 3$, the rank of $H^4(M, \, {\mathbb Z})$ is $3$, while
${\rm rank}(H^4(M,\, {\mathbb Z})) \,=\, 2$ for $r\,=\,2$.
The $\mathbb Q$--vector space $H^4(M,\, {\mathbb Q})$ is generated by
\begin{equation}\label{eq:generators}
(f_2)^2\, , \ \ a_2\ \ \text{ and} \ \ f_3\, ,
\end{equation}
where $a_2$ and $f_3$ are defined in \eqref{cg}.
(Note that $f_3\,=\, 0$ if $r\,=\, 2$.)
\end{itemize}
\end{proposition}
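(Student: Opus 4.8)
The plan is to read off all three statements from the standard description, due to Atiyah and Bott, of the cohomology of $M$ in terms of the Künneth components of the Chern classes of the Poincaré bundle $\SP$. Decomposing $c_i(\SP)\in H^{2i}(X\times M)$ under the Künneth isomorphism and using $H^*(X)=H^0(X)\oplus H^1(X)\oplus H^2(X)$, the slant product $c_i(\SP)/[p]\in H^{2i}(M)$ is the component along $H^0(X)$, the slant product $c_i(\SP)/[X]\in H^{2i-2}(M)$ is the component along $H^2(X)$, and the $H^1(X)$-components are odd-degree classes in $H^{2i-1}(M)$. In particular $f_2,a_2,f_3$ of \eqref{cg} are among these slant products. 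The two nonformal facts I would import are: (i) $H^*(M,\ZZ)$ is torsion-free, and (ii) $H^*(M,\QQ)$ is generated as a ring by the slant-product classes above. Both follow from the equivariant Morse theory of Atiyah--Bott: the stratification of the space of connections by Harder--Narasimhan type is equivariantly perfect, which in the coprime case (where semistability equals stability and $M$ is smooth projective) forces torsion-freeness, while (ii) is the Atiyah--Bott generation result (for $r=2$ all of this is Newstead's explicit computation).

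For the rank of $H^2$ I would argue Hodge-theoretically. The variety $M$ is unirational (indeed rational, by King--Schofield), so $h^{i,0}(M)=0$ for $i\geq 1$ and hence $H^1(M,\SO_M)=0=H^2(M,\SO_M)$. The exponential sequence then makes $c_1\colon \Pic(M)\to H^2(M,\ZZ)$ an isomorphism, so by \eqref{pm} we get $H^2(M,\ZZ)=\ZZ$ and in particular $\mathrm{rank}\,H^2(M,\QQ)=1$. Since this space is one-dimensional, $f_2$ generates it provided $f_2\neq 0$; the nonvanishing of $f_2$ is classical (Atiyah--Bott, Newstead), and can be seen by relating $f_2$ to the ample generator $c_1(\SO_M(1))$ through Grothendieck--Riemann--Roch applied to \eqref{eq:pullbacko1}.

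For the rank and generators of $H^4$, I would combine the generation statement with a count of degree-four monomials. The available generators in low degree are: $f_2\in H^2$; the degree-three $H^1(X)$-classes; the degree-four classes $a_2=c_2(\SP)/[p]$ and $f_3=c_3(\SP)/[X]$; and $c_1(\SP)/[p]\in H^2$, which lies in the one-dimensional $H^2(M,\QQ)$ and is thus a multiple of $f_2$. A degree-four class can therefore only be assembled as $(f_2)^2$, $a_2$ or $f_3$, since the product of two odd classes lands in $H^6$. Hence $H^4(M,\QQ)$ is spanned by $(f_2)^2,a_2,f_3$, matching \eqref{eq:generators}. When $r=2$ the bundle $\SP$ has rank two, so $c_3(\SP)=0$ and $f_3=0$, leaving only $(f_2)^2$ and $a_2$.

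The main obstacle is the final step: showing that the spanning set is in fact a basis, i.e. $b_4(M)=3$ for $r\geq 3$ and $b_4(M)=2$ for $r=2$ (equivalently, that there is no linear relation among $(f_2)^2,a_2,f_3$ beyond the vanishing of $f_3$ when $r=2$). This exact Betti number is the genuinely computational input; it is extracted from the Atiyah--Bott/Zagier formula for the Poincaré polynomial of $M$ (and from Newstead's determination of the cohomology ring when $r=2$). Everything else in the statement is formal once torsion-freeness and the generation result are granted.
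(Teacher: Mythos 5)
Your proposal is correct and follows essentially the same route as the paper, which simply cites Atiyah--Bott (torsion-freeness, generation by K\"unneth components of $c_i(\SP)$, and the Betti numbers, the last also via Jeffrey--Kirwan and Biswas--Raghavendra) for all three assertions. The only difference is that you spell out the standard derivations from those inputs (rationality plus the exponential sequence for $H^2$, and the degree count of monomials in the K\"unneth generators for $H^4$), which the paper leaves to the references.
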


In \cite[p.~578, Theorem 9.9]{AB} it is proved that $H^*(M,\, {\mathbb Z})$ is torsionfree. See 
\cite[p.~582, Proposition 9.13]{AB} for the second statement. For the third statement, see 
\cite[p.~543, Proposition 2.20]{AB}, \cite[p.~114, Section 2]{JK}, \cite[p.~2, Theorem 1.5]{BR}.

The cohomology class $a_2$ in Proposition \ref{prop1}(3) depends 
on the choice of Poincar\'e bundle $\SP$. In the following lemma we show that $c_2(\SP)/[p]$ in
\eqref{cg} can be 
replaced by $c_2({\rm End}(\SP))/[p]$, which does not depend on the choice of Poincar\'e 
bundle. This would simplify our later calculations.

\begin{lemma}\label{newgenerators}
The cohomology classes
$$
(f_2)^2,\ \ b_2\,:=\,c_2({\rm End}(\SP))/[p] \ \ \text{ and} \ \ \quad f_3
$$
generate $H^4(M, \, {\mathbb Q})$.
(Note that $f_3\,=\, 0$ if $r\,=\, 2$.)
\end{lemma}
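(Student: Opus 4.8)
The plan is to express $b_2 = c_2(\operatorname{End}(\SP))/[p]$ in terms of the known generators $(f_2)^2$, $a_2$, and $f_3$ from Proposition~\ref{prop1}(3), and to verify that $b_2$ can replace $a_2$ in the generating set. Since $(f_2)^2$, $a_2$, $f_3$ already generate $H^4(M,\QQ)$, it suffices to show that $b_2$ is a linear combination of these in which the coefficient of $a_2$ is nonzero; then $a_2$ can be solved for in terms of $b_2$, $(f_2)^2$, $f_3$, and the new triple will generate the same space.

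First I would compute the Chern character, or directly the low Chern classes, of $\operatorname{End}(\SP) = \SP \otimes \SP^\vee$ on $X \times M$ in terms of the Chern classes of $\SP$. Writing $c_1(\SP) = \alpha$ and $c_2(\SP) = \beta$, standard formulas give $c_1(\operatorname{End}(\SP)) = 0$ and $c_2(\operatorname{End}(\SP)) = 2r\,c_2(\SP) - (r-1)c_1(\SP)^2$, where $r = \rk(\SP)$. Taking the slant product with $[p] \in H_0(X,\ZZ)$ is linear and compatible with cup products via the Künneth decomposition, so I would split $\alpha = c_1(\SP)$ into its $H^2(X)\otimes H^0(M)$ and $H^0(X)\otimes H^2(M)$ components and track how each piece of $c_1(\SP)^2$ and $c_2(\SP)$ contributes upon slanting by $[p]$. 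The term $2r\,c_2(\SP)/[p] = 2r\,a_2$ produces the desired $a_2$ with the explicitly nonzero coefficient $2r$, while the $(r-1)c_1(\SP)^2/[p]$ term will reduce to a combination of $(f_2)^2$ and a class of the form $f_2 \cdot (c_1(\SP)/[p])$, the latter lying in the span of $(f_2)^2$ since $H^2(M,\QQ)$ is one-dimensional and generated by $f_2$ by Proposition~\ref{prop1}(2).

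The computation thus yields $b_2 = 2r\,a_2 + (\text{rational combination of } (f_2)^2)$, with no $f_3$ appearing since $f_3$ comes from $c_3$ rather than $c_2$. Because the coefficient $2r$ of $a_2$ is nonzero, this relation is invertible: $a_2$ lies in the $\QQ$-span of $b_2$ and $(f_2)^2$. Hence replacing $a_2$ by $b_2$ keeps the spanning set, and $(f_2)^2$, $b_2$, $f_3$ generate $H^4(M,\QQ)$.

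I expect the main obstacle to be the careful bookkeeping of the Künneth components when forming $c_1(\SP)^2$ and taking the slant product by $[p]$, since the degree-one class $c_1(\SP)$ has mixed Künneth type and its square contributes cross terms; one must confirm that every such term either vanishes or lands in the span of $(f_2)^2$ rather than producing an independent $a_2$ or $f_3$ contribution. Once that is handled, the nonvanishing of the coefficient of $a_2$ (which is essentially forced by the rank factor $2r$) makes the change of generators immediate.
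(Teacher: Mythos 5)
Your proposal is correct and follows essentially the same route as the paper: write $c_2({\rm End}(\SP))$ as $\pm 2r\,c_2(\SP)$ plus a multiple of $c_1(\SP)^2$, slant by $[p]$ (which is restriction to the slice $\{p\}\times M$, so the $c_1^2$ term becomes $c_1(\SP_p)^2\,=\,k^2(f_2)^2$ by Proposition \ref{prop1}(2)), and invert the relation using the nonzero coefficient $2r$ of $a_2$. Incidentally, your coefficient $-(r-1)$ on $c_1(\SP)^2$ is the correct one (the paper writes the identity with different coefficients), but this discrepancy is immaterial since that term lands in the span of $(f_2)^2$ either way.
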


\begin{proof}
In view of Proposition \ref{prop1}(3), it
suffices to prove that $a_2$ in \eqref{eq:generators}
can be expressed as a function of the classes \eqref{eq:generators}.
The slant product
$$
H^k(X\times M,\, {\mathbb Z}) \otimes H_\ell (X,\, {\mathbb Z})
\,\too\, H^{k-\ell}(M,\, {\mathbb Z})\, \ \ \quad (\eta,c)\,\longmapsto\, \eta/c
$$
satisfies the following natural condition \cite[p. 264, (29.23)]{GH}:
For morphisms $f\,:\,X'\,\too\, X$ and $g\,:\,M'\,\too\, M$,
$$
((f \times g)^*\eta)/c \,=\, g^* (\eta/f_*(c))\, .
$$
In particular, if $i\,:\,x\,\hookrightarrow\, X$ is a point 
and $\eta\,\in\, H^k(X\times M,\, {\mathbb Z})$, then
\begin{equation}\label{eq:restriction}
\xi/[i(x)] \,=\, (i\times \id_M)^* \xi/[x]\,=\, \xi|_{i(x)\times M}\ \,\in\, H^k(M,\, {\mathbb Z})\, .
\end{equation}
Now consider $c_2(\SP)/[p]$ in \eqref{cg}. We have
$$
c_2({\rm End}(\SP))/[p] \,=\, 
-2r c_2(\SP)/[p] + \big( c_1(\SP)^2\big) /[p]\, .
$$
Using \eqref{eq:restriction} it follows that
$\big( c_1(\SP)^2\big) /[p]\,=\,c_1(P_p)^2$. Note that
Proposition \ref{prop1}(2) says that $c_1(P_p)\,=\,kf_2$ for some $k\,\in\, \QQ$.
Consequently, we have
$$
a_2\,=\, \frac{-b_2+k^2 (f_2)^2}{2r}\, ,
$$
which proves the lemma.
\end{proof}

\section{Hecke curves}

In this section we recall the definition and basic properties of Hecke
curves. 
Let $x\in X$ be a point. Let $F$ be a vector bundle on $X$ with 
determinant $\xi(x)=\xi\otimes \SO_X(x)$. Let $q:F\too k_x$ be a
surjection to the sky-scraper sheaf $k_x$. We obtain a short exact
sequence
\begin{equation}\label{eq:hecke}
0\,\too\, E \,\too\, F \,\too\, k_x \,\too\, 0
\end{equation}
and we say that $E$ is the Hecke transform of $E$, at the point $x$,
with respect to the quotient $q$. If we let $q$ vary among all the
quotients of $F$ with image $k_x$,
we obtain a family of vector bundles of determinant $\xi$ 
parametrized by the projective
space $\PP(F^\vee_x)$ (we take the convention that $\PP$ is the
projective space of lines)
\begin{equation}\label{eq:family}
0\,\too\, \SE\,\too\, p^*_X F\,\too\, \SO_{\{x\}\times \PP(F^*_x)}(1)\,\too\, 0
\end{equation}

\begin{definition}[{\cite[p.~306, Definition 5.1 and Remark 5.2]{NR}}]
Let $l$, $m$ be integers. 
A vector bundle
$F$ over $X$ is $(l,\,m)$--{\em stable} if, for every
proper subbundle $G$ of $F$ of positive rank,
$$
\frac{\deg (G)+l}{\rk G}\,<\,\frac{\deg (F)+l-m}{\rk F}\, .
$$
\end{definition}
We remark that $(l,\,m)$--stability is a Zariski open condition \cite[Proposition 5.3]{NR}.
Note that $(0,\,0)$--stability is the same thing as usual stability.
If $F$ is $(l,\,m)$--stable in \eqref{eq:hecke}, then $E$ is
$(l,\,m-1)$--stable \cite[Lemma 5.5]{NR}. 
Indeed, if a subbundle $G$ of $E$ contradicts
$(l,\,m-1)$--stability, then the subbundle of $F$ generated by $G$ will
contradict the $(l,\,m)$--stability of $F$.

Let $F$ be a $(0,\,1)$--stable bundle of determinant $\xi(x)$. The family of Hecke
transforms \eqref{eq:family} gives a morphism, which we call Hecke space: 
$$
\psi\,:\,\PP(F_x^\vee) \,\too\, M_X(r,\xi)
$$
This morphism is an embedding \cite[Lemma 5.9]{NR}.
A Hecke curve is the restriction of this morphism to a line 
$\PP^1 \subset \PP(F_x^\vee)$
$$
\psi|_{\PP^1}\,:\,\PP^1\,\too\, M_X(r,\xi)
$$ 
We use the term Hecke ``curve'' instead of ``line'', 
because the degree of this morphism is not one. In fact, using formula
\eqref{eq:pullbacko1} we can calculate $\deg \psi|_{\PP^1}\,:=\,\deg
\psi|_{\PP^1}^* \SO_{M}(1)\,=\,r$.

\begin{lemma}
If $r$ and $d\,=\,\deg \xi$ are coprime, and $g\,\geq\, 3$, 
then the $(0,\,2)$--stable vector bundles with determinant
$\xi(2x)$ give a non-empty Zariski open subset of $M_X(r,\xi(2x))$.
\end{lemma}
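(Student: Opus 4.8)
The plan is to reduce the assertion to a non-emptiness statement and then prove that by a dimension count. Since $(l,m)$--stability is a Zariski open condition by \cite[Proposition 5.3]{NR}, and since $(0,2)$--stability of $F$ forces $\deg(G)/\rk(G)<d/r<(d+2)/r$ for every proper subbundle $G$, hence ordinary stability, the $(0,2)$--stable bundles with determinant $\xi(2x)$ constitute an open subset of the smooth projective variety $M_X(r,\xi(2x))$, whose dimension is $N:=(r^2-1)(g-1)$. It therefore suffices to show that the complementary ``bad'' locus $\mathcal B$, consisting of those stable $F$ with $\det F=\xi(2x)$ (so $\deg F=d+2$) that fail $(0,2)$--stability, satisfies $\dim\mathcal B<N$; then $\mathcal B\neq M_X(r,\xi(2x))$ and the open $(0,2)$--stable locus is non-empty.

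First I would fix the numerical type of a destabilizing subbundle. If $G\subset F$ of rank $s$ and degree $e$ violates $(0,2)$--stability, then
\[
\frac{d}{r}\,\le\,\frac{e}{s}\,<\,\frac{d+2}{r},
\]
the upper inequality being stability of $F$. Clearing denominators gives $0\le re-ds<2s$, and since $\gcd(r,d)=1$ with $0<s<r$ forces $re\neq ds$, we get $1\le re-ds\le 2s-1$. Among all $(0,2)$--destabilizing subbundles I choose $G$ of minimal rank; a subbundle of $G$ of larger slope would again destabilize and have smaller rank, so minimality makes $G$ semistable. I then claim $\Hom(Q,G)=0$ for $Q:=F/G$: the image of a nonzero map $Q\too G$ would be a nonzero proper quotient of $F$, hence of slope $>(d+2)/r$ by stability of $F$, yet a subsheaf of the semistable $G$, hence of slope $\le e/s<(d+2)/r$, a contradiction. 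This vanishing is the crucial simplification, as it makes $\dim\Ext^1(Q,G)=-\chi(Q^\vee\otimes G)$ depend only on the numerical type.

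Next I would bound the dimension of the family of such $F$ of a fixed type $(s,e)$, realized as extensions
\[
0\,\too\, G\,\too\, F\,\too\, Q\,\too\, 0,
\]
with $[G]$ semistable of rank $s$, degree $e$ (dimension $s^2(g-1)+1$), with $[Q]$ semistable of rank $r-s$, degree $d+2-e$ (dimension $(r-s)^2(g-1)+1$), subject to $\det G\otimes\det Q=\xi(2x)$ (removing $g$ dimensions), and the class in $\PP(\Ext^1(Q,G))$. Since extension classes on a line give isomorphic $F$, the resulting family of bundles $F$ has dimension at most
\[
s^2(g-1)+(r-s)^2(g-1)+s(r-s)(g-1)+(1-g)+\big(s(d+2)-re\big),
\]
using $\Hom(Q,G)=0$ and Riemann--Roch for $\dim\Ext^1(Q,G)$. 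This equals $N-(g-1)s(r-s)+\big(s(d+2)-re\big)$, and since $s(d+2)-re=2s-(re-ds)\le 2s-1$, it is at most $N+s\big(2-(g-1)(r-s)\big)-1$. As $g\ge 3$ and $0<s<r$ give $(g-1)(r-s)\ge 2$, this is $\le N-1<N$. The borderline $g=3$, $s=r-1$ is exactly where $(g-1)(r-s)=2$, and there it is the coprimality bound $re-ds\ge 1$ that keeps the inequality strict. As there are only finitely many admissible types $(s,e)$, taking the union stays below dimension $N$.

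The step needing the most care is covering all of $\mathcal B$, not merely the bundles with $Q$ semistable. The point is that $Q$, being a quotient of the stable bundle $F$, has all its Harder--Narasimhan slopes $>\mu(F)=(d+2)/r$, so it varies in a bounded family; I would stratify that family by Harder--Narasimhan type. The vanishing $\Hom(Q,G)=0$ above used only stability of $F$ and semistability of $G$, so it persists for every such $Q$, and the extension dimension is governed by the same Riemann--Roch computation on each stratum, while the non-semistable strata have strictly smaller dimension than the semistable one by the Shatz stratification and hence only improve the estimate. Combining these with the count of the previous paragraph yields $\dim\mathcal B<N$, which gives non-emptiness and, together with the openness, proves the lemma.
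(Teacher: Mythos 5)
Your proof is correct and follows essentially the same route as the paper: both bound the dimension of the locus of non-$(0,2)$-stable bundles by parametrizing them as extensions $0\to G\to F\to Q\to 0$, computing $\dim\Ext^1(Q,G)$ by Riemann--Roch, and using coprimality of $r$ and $d$ together with $g\geq 3$ to get the strict inequality with $\dim M_X(r,\xi(2x))$. You supply some details the paper's sketch leaves implicit (minimality of $\rk G$ forcing semistability, the vanishing $\Hom(Q,G)=0$ behind $\dim\Ext^1=-\chi$, and the Harder--Narasimhan stratification for non-semistable $Q$), but the underlying count is identical.
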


\begin{proof}
The case of $(0,\,1)$--stability and $(1,\,0)$--stability is considered in
\cite[Proposition 5.4]{NR}. We
sketch here the analogous argument for $(0,\,2)$--stability.

We shall estimate the dimension of the complement of the subset of
$(0,\,2)$--stable bundles. 
A vector bundle $F$ of degree $d+2$ and rank $n$ is not
$(0,\,2)$--stable if it has a subbundle $F'$
of rank $n'\,<\,n$ and degree $d'$ with $0\,\geq\, ((d+2)-2)r'-d'r\,=\,dr'-d'r$.
Since $r$ and $d$ are coprime, equality cannot hold, so we have
\begin{equation}\label{eq:unstable}
dr'-d'r\, <\, 0\, .
\end{equation}
We have
$$
h\,:=\,\dim H^1(Hom(F/G,\,G))\,=\,(d+2)r'-d'r+r'(r-r')(g-1)
$$
so the subset of $M_X(r,\xi(2x))$ corresponding to bundles which are
not $(0,\,2)$--stable has dimension at most 
$$
\big( r^2(g-1)+1\big) + \big( (r-r')^2(g-1)+1\big) - g + h-1\,=\,
$$
$$
( r^2-rr' +r'{}^2-1))(g-1)+r'(d+2)-d'r
$$
and this is smaller than $\dim M_X(r,\xi(2x))\,=\,(r^2-1)(g-1)$ if and only if 
\begin{equation}\label{eq:ineq}
r'(r-r')(g-1)\,>\,r'(d+2)-d'r\, .
\end{equation}
Now, using \eqref{eq:unstable}, it is easy to see that this holds if $g\geq 3$.
\end{proof}

\begin{lemma}
\label{general}
Let $x\in X$ be a point and let $S\subset M_X(r,\xi)$ be a closed subset of codimension
$\codim(S,\,X)\,\geq\, 2$. Then a general Hecke curve (with respect to $x$) does not intersect $S$.
\end{lemma}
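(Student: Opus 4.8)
The plan is to run a standard incidence-variety dimension count over the space of all Hecke curves with respect to $x$. First I would set up the parameter space $B$ of such curves: a point of $B$ is a pair $(F,\ell)$, where $F$ is a $(0,1)$--stable bundle of determinant $\xi(x)$ (these form a non-empty Zariski open subset of the irreducible variety $M_X(r,\xi(x))$, of dimension $(r^2-1)(g-1)$) and $\ell\,\subset\, \PP(F_x^\vee)\,\cong\, \PP^{r-1}$ is a line. Since lines in $\PP^{r-1}$ are parametrized by the Grassmannian of dimension $2(r-2)$, the relative Grassmannian bundle $B$ is irreducible of dimension $(r^2-1)(g-1)+2(r-2)$. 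Over $B$ sits the universal Hecke curve, with an evaluation morphism to $M\,=\,M_X(r,\xi)$ sending a point of the fiber over $(F,\ell)$ to the corresponding Hecke transform; write $C_b\,\subset\, M$ for the Hecke curve attached to $b\,\in\, B$. I would then form the incidence variety $I\,:=\,\{(b,y)\,\in\, B\times S \,:\, y\,\in\, C_b\}$ and aim to show $\dim I\,<\,\dim B$; this forces the first projection $I\,\too\, B$ to omit a dense open subset, which is exactly the assertion.

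The heart of the argument is to bound the fibers of the second projection $I\,\too\, S$, that is, to count the Hecke curves through a fixed point $y\,\in\, S$, represented by a stable bundle $E_y$. A Hecke curve through $y$ comes from a bundle $F$ fitting into $0\,\too\, E_y\,\too\, F\,\too\, k_x\,\too\, 0$ together with a line in $\PP(F_x^\vee)$ passing through the point representing this quotient. The upper modifications $F$ are precisely the subsheaves $E_y\,\subset\, F\,\subset\, E_y(x)$ with $F/E_y\,\cong\, k_x$; they correspond to lines in the $r$--dimensional space $E_y(x)/E_y\,\cong\, (E_y)_x$, hence are parametrized by $\PP((E_y)_x)\,\cong\, \PP^{r-1}$, of dimension $r-1$ (retaining only the $(0,1)$--stable ones, an open condition that can only shrink the count). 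For each such $F$ the morphism $\psi_F\,:\,\PP(F_x^\vee)\,\too\, M$ is an embedding, so the quotient realizing $E_y$ determines a single point of $\PP(F_x^\vee)$; the lines through this fixed point form a $\PP^{r-2}$, of dimension $r-2$. Hence the fiber of $I\,\too\, S$ over $y$ has dimension at most $(r-1)+(r-2)\,=\,2r-3$.

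Combining these estimates gives $\dim I\,\leq\, \dim S + (2r-3)$. Since $\codim(S,\,M)\,\geq\, 2$, one has $\dim S\,\leq\, (r^2-1)(g-1)-2$, so
$$
\dim I \,\leq\, (r^2-1)(g-1)-2+(2r-3) \,=\, (r^2-1)(g-1)+2r-5 \,=\, \dim B - 1\, .
$$
Thus $\dim I\,<\,\dim B$, so the image of $I$ under projection to $B$ is a proper closed subset of the irreducible variety $B$; its complement is a dense open locus parametrizing Hecke curves disjoint from $S$.

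I expect the main obstacle to be the fiber estimate of the second paragraph: justifying cleanly that the inverse Hecke modifications of $E_y$ are parametrized by $\PP((E_y)_x)$, and that for each resulting $(0,1)$--stable $F$ the point of $\PP(F_x^\vee)$ mapping to $y$ is unique. The dimension margin is exactly one, so the count of the $2r-3$ parameters of Hecke curves through a fixed point must be carried out carefully, and the conclusion relies on the irreducibility of $B$ and the stated dimension of $M_X(r,\xi(x))$.
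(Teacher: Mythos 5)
Your argument is correct and is essentially the paper's: both are dimension counts on an incidence correspondence, resting on the facts that the Hecke modifications of a fixed $E_y$ at $x$ form a $\PP^{r-1}$ and that the lines through a fixed point of $\PP(F_x^\vee)\cong\PP^{r-1}$ form a $\PP^{r-2}$. The paper merely packages the count differently, fibering the incidence variety $H_x\subset \PP(\SU|_{\{x\}\times M})$ over $M_X(r,\xi(x))$ to show that for generic $F$ the intersection of $S$ with the Hecke space $\PP(F_x^\vee)$ has dimension at most $r-3$, hence codimension at least two, and then invoking that a general line misses such a locus --- which is exactly your $(r-1)+(r-2)$ fiber bound in disguise.
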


\begin{proof}
This is proved, for instance, in \cite[Lemma 4]{BBGN}. We sketch the
proof for the convenience of the reader.
Let $\SU$ be the universal bundle on $X\times M_X(r,\xi)$, and let 
$$
P_x\,=\,\PP(\SU|_{\{x\}\times M})
$$
A point of $P_x$ over $E$ corresponds to a point in the projective space
$\PP(E_x)\,=\,\PP(\Ext^1(k_x,E))$, so it corresponds to a short exact
sequence as \eqref{eq:hecke}. Let $H_x\,\subset\, P_x$ be the open subset
such that $F$ is $(0,1)$--stable. Then there is a morphism $q\,:\,H_x\,\too
\,M_X(r,\xi(x))$, whose image we call $V$. 
$$
\xymatrix{
{H_x}\ar[r]^-{q}\ar[d]_{p} & {V\subset M_X(r,\xi(x))}\\
{M_X(r,\xi)}
}
$$
The fiber of $p$ is an open subset of $\PP(E_x)$, and the fiber of $q$ is $\PP(F_x^\vee)$. Let $S'
\,=\,p^{-1}(S)$. It is either empty or $\dim S' \,=\,\dim S +r-1$. Consider now the morphism $q$.
The fiber of $S'$ over $F$ is $P(F,x)\cap S$, 
the intersection of $S$ with the Hecke space corresponding to $F$. 
If $q(S')$ has positive codimension, then for generic $F$ the Hecke space does not intersect $S$. On
the other hand, if $q(S')$ is dense in $V$, the generic fiber of the projection from $S'$ to $V$ has dimension
$$
\dim S + r-1-\dim V\,=\,\dim S+r-1-\dim M_X(r,\xi)\,\leq\, r-3\, .
$$
Summing up, for generic $F$, the codimension of $P(F,x)\cap S$ in $P(F,x)$ is at least
two. Then, a general Hecke line defined using $F$, which is by definition a line in $P(F,x)$, does not intersect $S$.
\end{proof}

\section{Hecke transformation on two points of the curve}\label{sec:heckecycle}

Let $F$ be a $(0,\,2)$--stable bundle with rank $r$ and determinant
$\xi(x_1+x_2)\,=\, \xi\otimes{\mathcal O}_X(x_1+x_2)$ 
for fixed points $x_1,\, x_2\,\in\,
X$. We are going to perform Hecke transformations on $F$ over these two points
$x_1,\, x_2$. The parameter space will be
$$
\PP_1\times \PP_2\,:=\, \PP(E_{x_1}^\vee) \times \PP(E_{x_2}^\vee)\,\cong\, 
\PP^{r-1}\times \PP^{r-1}$$ 
For a point $x\,\in\, X$ let
$$i_x\,:\,\PP_1\times \PP_2 \,\longrightarrow\, X\times \PP_1\times \PP_2,\, \ \
(y,\, z)\, \longmapsto\, (x,\, y,\, z)$$
be the inclusion map. Let $p_{\PP_1\times \PP_2}\, :\, X\times\PP_1\times \PP_2\, \longrightarrow\,
\PP_1\times \PP_2$ be the natural projection.

For integers $a,\, b$, the line bundle
${\mathcal O}_{\PP(E_{x_1}^\vee)}(a)\boxtimes {\mathcal O}_{\PP(E_{x_2}^\vee)}(b)$
on $\PP(E_{x_1}^\vee) \times \PP(E_{x_2}^\vee)$ will be denoted by 
$\SO(a,\,b)$. Consider the vector bundle $U$ on
$X\times \PP_1\times \PP_2$ defined by the short exact sequence
\begin{equation}\label{fU}
0 \,\too\, U \,\too\, p^*_X F \,\too\,(i_{x^{}_1})_* p^*_{\PP_1\times \PP_2}\SO(1,\,0) \oplus
(i_{x^{}_2})_* p^*_{\PP_1\times \PP_2}\SO(0,\,1) \,\too\, 0\, .
\end{equation}
Using the fact that $F$ is $(0,\,2)$--stable it can be shown that $U$ is a family of stable 
bundles on $X$. Indeed, for $(p_1,\,p_2)\,\in\, \PP_1\times \PP_2$, if a subbundle $G$ of the 
vector bundle $U_{(p_1,p_2)}\, :=\, U\vert_{X\times(p_1,p_2)}$ on $X$ contradicts the 
stability condition, then the subbundle of $F$ generated by $G$ contradicts the 
$(0,\,2)$--stability of $F$ (see {\cite[p.~307, Lemma 5.5]{NR}}).

From \eqref{fU} it follows that $$(\bigwedge\nolimits^r U_{(p_1,p_2)})\otimes {\mathcal O}_X(x_1+x_2)\,=\,
\bigwedge\nolimits^r F\,=\, \xi\otimes{\mathcal O}_X(x_1+x_2)\, .$$ This implies that
$\bigwedge^r U_{(p_1,p_2)}\,=\,\xi$. Let
\begin{equation}\label{eq:psi}
\psi\,:\,\PP_1\times \PP_2\,\too\, M \,=\, M_X(r,\xi)
\end{equation}
be the corresponding classifying morphism. 

If the point $p$ in \eqref{eq:pullbacko1} is different from $x_1$ and $x_2$, then
$U_p$ (as in \eqref{eq:pullbacko1}) for the family $U$ in \eqref{fU} is evidently trivial. 
Therefore, from \eqref{eq:pullbacko1} it follows that
\begin{equation}\label{rr}
\psi^* \SO_M(1) \,\cong\, \SO(r,\,r)\, .
\end{equation}
We assume that the Poincar\'e bundle is normalized by imposing the condition
(see \eqref{eq:degmod})
\begin{equation}\label{nc}
0\,<\, d'\,:=\, \deg (\SP_p) \,< \,r\, .
\end{equation}
By the universal property of the Poincar\'e bundle, there exist integers
$a_1$, $a_2$ such that
\begin{equation}\label{aa}
(\id_X\times \psi)^* \SP\,=\,U \otimes p^*_{\PP_1\times \PP_2}\SO(a_1,\,a_2)\, .
\end{equation}
Once we restrict the isomorphism in \eqref{aa} to $p\times M$, 
it follows from \eqref{rr} and \eqref{nc} that
$a_1\,=\,a_2\,=\, d'$. Hence, denoting 
$L\,:=\, p^*_{\PP_1\times \PP_2}\SO(d',\,d')$,
$$
(\id_X\times \phi)^* \SP\,=\, U\otimes L\, .
$$
We now calculate the Chern classes
\begin{align*}
c_1(U \otimes L)&=
d\; P + rd' \;(D_1+D_2)\\
c_2(U \otimes L)&=
(1+(r-1)dd') \; P(D_1+D_2) + d'{}^2\frac{r(r-1)}{2} \; (D_1+D_2)^2\\
c_3(U \otimes L)&= -P(D_1^2+D_2^2)+
\big(d'(r-2)+d'{}^2d\frac{(r-1)(r-2)}{2}\big)\; (D_1+D_2)^2+\\
 & \quad + \big( d'{}^3\frac{r(r-1)(r-2)}{6}\big)\; (D_1+D_2)^3\\
c_2({\rm End}(U)) &= -2r\; P(D_1+D_2)\, ,
\end{align*}
where $D_1\,\in\, H^2(X\times \PP_1\times \PP_2,\, {\mathbb Z})$ (respectively,
$D_2\,\in\, H^2(X\times \PP_1\times \PP_2,\, {\mathbb Z})$) 
is the pullback of the first Chern class $H_1\,\in\, H^2(\PP_1\times \PP_2,\, {\mathbb Z})$
(respectively, $H_2\,\in\, H^2(\PP_1\times \PP_2,\, {\mathbb Q})$) 
of the line bundle $\SO (1,\,0)$ (respectively, $\SO(0,\,1)$),
and $P$ is the pullback of the class of a point in $X$.

We calculate the pullback of the generators using the pullback formula
for the slant product:
$$
\psi^* f_2 \,=\, \psi^* (c_2(\SP)/[X])\,=\, 
((\id_X\times \psi)^* c_2(\SP))/[X]
$$
$$
\ =\, c_2(U\otimes L)/[X]\,=\,(1+dd'(r-1)) (H_1+H_2)\, .
$$
Analogously, we get that
\begin{align}
\psi^* f_2^2&\,=\, (1+dd'(r-1))^2 (H_1+H_2)^2\, , \nonumber \\
\psi^* b_2&\,=\,0\, , \label{classesp}\\
\psi^* f_3&\,=\,-(H_1^2+H_2^2) +
\big(d'(r-2)+d'{}^2d\frac{(r-1)(r-2)}{2}\big) (H_1+H_2)^2\, .\nonumber 
\end{align}

\section{Hecke transformation on moving point}

It this section we shall construct a family of vector bundles
parametrized by Hecke curves with a moving point.

Let $W$ be a $(0,\,1)$--stable bundle on $X$ of rank $r \, \geq\, 2$ with determinant
\begin{equation}\label{ex}
\bigwedge\nolimits^r W\,=\, \xi(x_0)\,=\, \xi\otimes {\mathcal O}_X(x_0)\, ,
\end{equation}
for a fixed point $x_0\,\in\, X$, and let $$W\,\surj\, Q$$ be a rank 2 torsionfree quotient; since 
$\text{rank}(W)\,=\, r\, \geq\, 2$, such a quotient $W$ exists ($W^\vee(m)$ is globally generated for $m\,\gg\, 0$, so 
choosing two linearly independent vectors on a fiber, there are two global sections restricting to those vectors, and 
if $G$ is the vector subbundle generated by those two sections, we get a 2-dimensional torsionfree quotient $W\,\surj 
\,G^\vee(m)$). Let $X_1$ be a copy of $X$, i.e., $X_1$ is a curve with a fixed isomorphism with $X$; the parameter 
space that we are going to construct involves several copies, so we shall employ this notation to distinguish between 
them. The points of $X_1$ will parametrize the points used to perform a Hecke transformation. Consider the projective 
bundle
$$
\xymatrix{
{\PP(Q^\vee)} \ar[d]^{\pi} \\ {X_1} \ar@{=}[r]& {X}
}
$$
A point in $y\,\in\, \PP(Q^\vee)$ over $x_1\,=\,\pi(y)\,\in\,X_1$ 
gives to a 1-dimensional quotient $$W_{x_1}\,\surj\, Q_{x_1}\,\surj\, \CC$$ of the 
fiber over $x_1$, so $\PP(Q^\vee)$ is, in a natural way, the parameter space of a family
of Hecke transformations with respect to a moving point. We shall write this family
explicitly. Consider the Cartesian diagram:
\begin{equation}\label{cd}
\xymatrix{
{P_\Delta} \ar[r]^-{i} \ar[d] &
{X\times \PP(Q^\vee)}\ar[d]^{\id_{X}\times \pi} \\
{\Delta } \ar[r] & {X \times X_1}\\
}
\end{equation}
where the morphism at the bottom is the
diagonal embedding $\Delta\,=\,X\,\too\, X\times X_1$, $t\, \longmapsto\,
(t,\, t)$. Note that $P_\Delta$ is a $\PP^1$--bundle over $\Delta$.
In fact it is canonically identified with $\PP(Q^\vee)$ once we invoke the
natural isomorphism between the diagonal $\Delta$ and $X_1\,=\, X$. From
this identification between $P_\Delta$ and $\PP(Q^\vee)$, let
$\SO_{P_\Delta}(1)\, \longrightarrow\, P_\Delta$ be the line bundle corresponding
to the tautological line bundle $\SO_{\PP(\QQ^\vee)}(1)$.

There is a canonical short exact
sequence of sheaves on $X \times \PP(Q^\vee)$:
\begin{equation}\label{cse}
0 \,\too\, F \,\too\, p^*_X W \,\too\, i_*\SO_{P_\Delta}(1) \,\too\, 0\, ;
\end{equation}
recall from \eqref{cd} that $i$ is the inclusion of $P$ in $X \times \PP(Q^\vee)$; here
we consider $F$ as a family of vector bundles on $X$ parametrized by $\PP(Q^\vee)$.
Using the condition that $W$ is $(0,\, 1)$--stable it can be shown that
the vector bundle $F_y\, :=\, F\vert_{X\times y}$ on $X$ is stable
for every point $y\, \in\, \PP(Q^\vee)$. Indeed, if a subbundle $S$ of $F_y$ contradicts the
stability condition, then the subbundle of $W$ generated by $S$ contradicts that
$(0,\,1)$--stability condition for $W$ \cite[p.~307, Lemma 5.5]{NR}.

We are going to calculate the Chern
character of the vector bundle $F$ in \eqref{cse}. The following notation for the Chow
classes on $X \times \PP(Q^\vee)$ will be used:
\begin{itemize}
\item $P_1$ (respectively, $P$) is the pullback of the class of a
point in $X_1$ (respectively, $X$).

\item ${P}_{01}$ is the pullback of the class of a point in $X \times
X_1$.

\item ${\delta}$ is the pullback of the class of the diagonal in
$X \times X_1$.

\item ${D}$ is the pullback of the divisor $\SO_{\PP(Q^\vee)}(1)$ on
$\PP(Q^\vee)$.
\end{itemize}

We can now calculate:
$$
\ch(W)\,=\,r + \big[(d+1) P \big] \, .
$$
Let $i(P_\Delta)\, \subset\, X\times \PP(Q^\vee)$ be the image of the closed inclusion
$i$ in \eqref{cd}. To identify $\ch(\SO_{i(P_\Delta)})$, we first do the
following calculations on $X \times X_1$:
$$
\ch(\SO_{X\times X_1}(-\Delta))\,=\,1-\Delta+\frac{\Delta^2}{2}\, ,
$$
$$
\ch(\SO_{\Delta})\,=\,\Delta-\frac{\Delta^2}{2}\,=\,\Delta-\frac{(2g(X)-2) p }{2}\, ,
$$
where $p\,\in\, H^4(X\times X_1,\, {\mathbb Z})$ is the class of a point in
$X \times X_1$. It follows that
$$
\ch(\SO_{i(P_\Delta)})\,=\,p^*_{X\times X_1} \ch(\SO_\Delta)\,=\,
\delta-(g(X)-1) P_{01}\, .
$$
Now,
$$
\ch(i_*\SO_{P_\Delta}(1))\,=\,\ch(\SO_{i(P_\Delta)}\otimes \SO_{\PP(Q^\vee)}(1))=
\Big(\delta-(g(X)-1) P_{01}\Big)\Big(1+D+\frac{D^2}{2}\Big)
$$
$$
=\, \delta \, + \, \big[\delta \wt{D} - (g(X)-1){P}_{01}\big] \, + \,
\big[ \frac{\delta D^2}{2} + (g(X)-1) P_{01}D \big]\,.
$$
Finally we obtain the Chern character of $F$:
\begin{equation}\label{sfprime}
\ch(F)\,=\,r \, + \, \big[ (d+1)P_2 - \delta \big] \, + \,
\big[ -\delta D + (g(X)-1)P_{01} \big] \, + \, 
\big[ -\frac{\delta D^2}{2} - (g(X)-1) P_{01}D \big]\, .
\end{equation}

It may be clarified that $F$ in \eqref{cse} is a family of stable vector bundles of degree $d$ parametrized by 
$\PP(Q^\vee)$, but the determinant is not fixed. Indeed, if $y\,\in\, \PP(Q^\vee)$ and 
$x_1\,=\,\pi(y)$, then the determinant of the vector bundle corresponding to the point $y$ is 
$(\bigwedge^r W)\otimes {\mathcal O}_X(-x_1)\,=\, \xi\otimes \SO_X(x_0-x_1)$ (see \eqref{ex}).
In particular, the family $F$ induces a morphism from $\PP(Q^\vee)$ to 
the moduli space $M_X(r,d)$ of stable vector bundles on $X$ of rank $r$ and degree $d$. But we 
want a morphism to the fixed determinant moduli space, so we shall tensor this family with an 
$r$-th root of $\SO_X(x_1-x_0)$. Since $x_1\in X_1$ is a moving point, 
to have a family of $r$-th roots we need to pass to a Galois 
cover of the parameter space $X_1$.

Let 
$$
f\,:\,X_1\,\too \,J(X)\, ,\ \ x_1\, \longmapsto\, {\mathcal O}_X(x_1-x_0)
$$
be the Abel-Jacobi map for $X_1$, where $x_0$ is the point in \eqref{ex}
(recall that $X_1\,=\,X$ is a copy of the
same curve, but we make this distinction in notation because of the
different roles they will play in the construction).
This morphism $f$ corresponds to a family of
line bundles on $X$ of degree zero parametrized by $X_1$, i.e., a line bundle $\SL$
on $X\times X_1$ such that $\SL|_{X\times x_1}\cong
\SO_{X}(x_1-x_0)$. Let $$w_r\,:\,J(X)\,\too\, J(X)\, ,\ \ L\, \longmapsto\, L^{\otimes r}$$ be the morphism
that sends a line bundle to its $r$-th tensor power. Consider the Cartesian diagram
\begin{equation}\label{dl}
\xymatrix{
{T} \ar[r]^-{f_r} \ar[d]_{t} & J(X) \ar[d]^{w_r}\\
{X_1} \ar[r]^-{f} & J(X)
}
\end{equation}
We note that $T$ is a connected Galois covering of $X_1$, because $w_r$ is a Galois covering and
the homomorphism $f_*\, :\, \pi_1(X_1)\, \longrightarrow\, \pi_1(J(X))$ induced by $f$ is surjective.
It is easy to check that, if the morphism $f$ in \eqref{dl} corresponds to a line
bundle $\SL$ on $X\times X_1$, then the morphism $f_r$ corresponds to a
line bundle $\SM$ on $X\times T$ such that 
$\SM^{\otimes r}\cong (\id_X\times t)^*\SL$, where $t$
is the map in \eqref{dl}. In other words, after pulling back from $X_1$ to $T$, the family
$\SL$ admits an $r$-th root namely $\SM$.

Let $Z$ be defined by the Cartesian diagram
\begin{equation}\label{defz}
\xymatrix{
 {Z} \ar[r]^-{q} \ar[d]_{\pi^{}_T} & \PP(Q^\vee) \ar[d]^{\pi}\\
 {T} \ar[r]^{t} &{X_1}\\
}
\end{equation}
Finally, we define the vector bundle
$$
U\,=\,(\id_{X}\times q)^*F\otimes (\id_X\times {\pi}_T{})^*\SM^{-1}
$$
on $X\times Z$, where $F$ is the vector bundle in \eqref{cse}. From the construction of
$U$ it is evident that $U$ is a vector bundle on $X \times T$ which
represents a family of vector bundles on $X$ of fixed determinant $\xi$.
Also, this is a family of stable vector bundles, because $F$ is a family
of stable vector bundles. Consequently, we have a classifying morphism
\begin{equation}\label{eq:varphi}
\varphi\,:\,Z \,\too\, M \, .
\end{equation}

Our objective now is to calculate the class $\varphi^* b_2\,=\,
\varphi^*(c_2({\rm End}(\SP)))/[p]$. Note that the advantage of working
with ${\rm End}(\SP)$ instead of $\SP$ is that we do not have to worry
about normalization of the Poincar\'e bundle, and also the
tensorization by the line bundle $\SM$ will not appear in the calculation. 
We have
$$
{\rm End}(U)\,=\, (\id_{X}\times q)^* {\rm End}(F)\, ,
$$
and
$$
c_2({\rm End}(F))\,=\,-\ch_2({\rm End}(F))-\frac{1}{2} c_1({\rm End}(F))^2 \,=\, -\ch_2({\rm End}
(F))
$$
$$
=\, -[\ch(F^\vee)\otimes \ch(F)]_2\,=\, 2r\ch_2(F)-\ch_1(F)^2
$$
$$
=\, 2r\,\delta D + (2r(g-1)-2(d+1)+(2-2g))\,P_{01}
$$
(see \eqref{sfprime}).
Recall that ``slanting with the class of a point is the same thing as
restriction to the slice'' (formula \eqref{eq:restriction}). 
It follows that, if $p$ is a point in $X$ and 
$[p]\,\in\, H_0(X,\, {\mathbb Z})$ is its homology class, 
then $P_{01}/[p]\,=\,0$ and $\delta
D/[p]\,=\, [\varpi]\,\in\, H^4(\PP(Q^\vee,\, {\mathbb Q})$,
where $[\varpi]\,\in\, H^4(\PP(Q^\vee),\, {\mathbb Z})$ is the positive generator.
So, we have
$$
c_2({\rm End}(F))/[p]\,=\,2r[\varpi] \,\in\, H^4(\PP(Q^\vee),\, {\mathbb Q})\, .
$$
Also, $c_2({\rm End}(F))/[p]\,=\,c_2({\rm End}(F_p))$,
where $F_p$ is the restriction of $F$ to the slice $p\times
\PP(Q^\vee)$ and then 
\begin{equation}\label{classesz}
\varphi^*b_2\,=\,\varphi^*c_2({\rm End}(\SP_p))\,=\,c_2({\rm End}(U_p))\,=\,
q^* c_2({\rm End}(F_p))=\deg(q) 2r \,=\, r^{2g} 2r\, ,
\end{equation}
where $\SP_p$ and $U_p$ respectively are the restrictions of $\SP$
and $U$ to the slice $p\times M$.

\section{Vanishing of Chern classes}

Let $E$ be a vector bundle on $M\,=\,M_X(r,\xi)$ 
such that the restriction of $E$ to every Hecke curve is trivial. 
Throughout this section, $E$ would satisfy this condition.

{}From the above condition it can be deduced that 
\begin{equation}\label{eq:c1}
c_1(E)\,=\, 0\, .
\end{equation}
Indeed, $H^2(M,\, {\mathbb Z})\,=\,\ZZ$, and a Hecke curve $f\,:\,\PP^1 \,\too\, M$
induces an injection on the second cohomology 
$$
f^* \,:\, H^2(M, \,{\mathbb Z}) \,\too\, H^2(\PP, \, {\mathbb Z})\,\cong\, \ZZ\, .
$$
Now, $f^*c_1(E)\, =\, c_1(f^*E)\,=\,0$, and hence \eqref{eq:c1} holds.
Recall that the rank of $H^4(M,\, {\mathbb Z})$ is 3 when $r\,\geq\, 3$, and it is $2$ when
$r\,=\,2$, and the generators are given by Lemma \ref{newgenerators}.

\begin{lemma}\label{lem:vanishpsi}
Let $\psi\,:\,\PP_1\times\PP_2\,\too\, M$ be the morphism \eqref{eq:psi}.
The pullback $E'\,:=\,\psi^*(E)$ is a trivial vector bundle on $\PP_1\times\PP_2$.
\end{lemma}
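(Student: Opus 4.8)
The plan is to combine the classical triviality criterion on projective space with a cohomology-and-base-change argument, using it to promote triviality on the two families of slices of $\PP_1\times\PP_2$ to triviality on the whole product.

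First I would analyze the slices. Fix a point $z\,\in\,\PP_2$; the Hecke transformation at $x_2$ determined by $z$ turns the $(0,2)$-stable bundle $F$ into a bundle $F'$ of determinant $\xi(x_1+x_2)\otimes\SO_X(-x_2)\,=\,\xi(x_1)$, which is $(0,1)$-stable by \cite[Lemma 5.5]{NR}. Since $x_1\,\neq\, x_2$, the fiber at $x_1$ is unchanged, so $\PP_1\,=\,\PP(F_{x_1}^\vee)\,=\,\PP((F')_{x_1}^\vee)$, and $\psi|_{\PP_1\times\{z\}}$ is precisely the Hecke-space morphism attached to $F'$. Hence its restriction to any line $\PP^1\,\subset\,\PP_1$ is a Hecke curve, so $E'|_{\PP_1\times\{z\}}$ is trivial on every line of $\PP_1\,\cong\,\PP^{r-1}$; by the triviality criterion on projective space (cf. \cite[p.~51, Theorem 3.2.1]{OSS}), $E'|_{\PP_1\times\{z\}}$ is trivial for every $z$. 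The symmetric argument, fixing $y\,\in\,\PP_1$ and performing the Hecke transformation at $x_1$, shows that $E'|_{\{y\}\times\PP_2}$ is trivial for every $y$.

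Next I would extract a product structure from the first family. Let $k\,=\,\rk(E)$ and let $\pi_2\,:\,\PP_1\times\PP_2\,\too\,\PP_2$ be the projection. Triviality of $E'|_{\PP_1\times\{z\}}$ gives $H^0(\PP_1\times\{z\},\, E'|_{\PP_1\times\{z\}})\,\cong\,\CC^k$ and vanishing higher cohomology, uniformly in $z$. Grauert's cohomology-and-base-change theorem then shows that $V\,:=\,\pi_{2*}E'$ is locally free of rank $k$ on $\PP_2$, that $R^i\pi_{2*}E'\,=\,0$ for $i\,>\,0$, and that the evaluation morphism $\pi_2^*V\,\too\,E'$ restricts on each fiber $\PP_1\times\{z\}$ to the evaluation map of a trivial bundle, hence is an isomorphism there. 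A morphism of vector bundles of equal rank that is an isomorphism on every fiber is an isomorphism, so $E'\,\cong\,\pi_2^*V$.

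Finally I would pin down $V$ using the second family. Restricting $E'\,\cong\,\pi_2^*V$ to a slice $\{y\}\times\PP_2$, on which $\pi_2$ is an isomorphism onto $\PP_2$, gives $V\,\cong\,E'|_{\{y\}\times\PP_2}$, which is trivial by the first step; hence $V$ is trivial and so is $E'\,\cong\,\pi_2^*V$. I expect the main obstacle to be the first step: one must verify that the slices $\PP_1\times\{z\}$ and $\{y\}\times\PP_2$ genuinely are Hecke spaces, so that their lines are actual Hecke curves, which rests on the $(0,2)$-to-$(0,1)$ reduction of stability and on the identification of the projectivized fibers. Once that is secured, the passage from slicewise triviality to global triviality is routine via base change; note that triviality along the two families of slices separately does not suffice without the cohomological product-structure step.
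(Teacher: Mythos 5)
Your argument is correct and follows essentially the same route as the paper's proof: triviality on each slice $\PP_1\times\{z\}$ via the line criterion of \cite[Theorem 3.2.1]{OSS}, descent of $E'$ to a bundle on $\PP_2$ along the projection, and then triviality of that bundle from the slices $\{y\}\times\PP_2$. You merely make explicit two points the paper leaves implicit — that the slices are genuine Hecke spaces (via the $(0,2)$-to-$(0,1)$ reduction) and that the descent is justified by Grauert's base-change theorem — which is a welcome amplification rather than a different proof.
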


\begin{proof}
For every line $l_1\,\subset \,\PP_1$ and every point $p_2\,\in\, \PP_2$, the restriction of 
$\psi$ to $l_1\times p_2\,\cong\, \PP^1$ is a Hecke curve, so $E'\vert_{l_1\times p_2}$ is trivial by 
the hypothesis. A vector bundle on a projective space 
is trivial if it is trivial 
when restricted to every line on the projective space
(\cite[p. 51, Theorem 3.2.1]{OSS}). Consequently, $E'\vert_{\PP_1\times p_2}$ is trivial, and 
this is true for every point $p_2\,\in\, \PP_2$.

Therefore $E'$ descends to a vector bundle $F$ on $\PP_2$, i.e., there is a vector bundle $F$ on 
$\PP_2$ such that $q^*F\,\cong\, E'$, where $q$ is the projection of $\PP_1\times\PP_2$
to $\PP_2$. In fact $F\,=\, q_*E'$.

Note that, for any $p_1\,\in\,\PP_1$, the restriction $E'\vert_{p_1 \times \PP_2}$
is isomorphic to $F$. As before, for every line $l_2$ in $\PP_2$, the 
restriction of $\psi$ to $p_1\times l_2$ is a Hecke curve, so $F$ is trivial on $p_1\times l_2$.
Hence $F$ is trivial by the above argument. Consequently, $E'\,=\,q^* F$ is trivial.
\end{proof}

\begin{lemma}
Let $\varphi\,:\,Z\,\too\, M$ be the morphism in \eqref{eq:varphi}.
The pullback $E_Z\,:=\,\varphi^*(E)$ has Chern classes
$$
c_1(E_Z)\,=\,0\,\in\, H^2(Z,\, {\mathbb Z})\ \ { and } \ \ c_2(E_Z)\,=\,0 \,\in\,
H^4(Z,\, {\mathbb Z})\, .
$$
\end{lemma}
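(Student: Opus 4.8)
The plan is to handle the two Chern classes in turn. For $c_1$ there is nothing to do beyond naturality: since $c_1(E)=0$ by \eqref{eq:c1}, we get $c_1(E_Z)=\varphi^*c_1(E)=0$. The entire content of the lemma is thus the vanishing of $c_2(E_Z)$, and the strategy is to show that $E_Z$ is trivial in the $\PP^1$-directions of $Z$, hence pulled back from the curve $T$, on which a second Chern class has nowhere to live.

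First I would record the shape of $Z$. In the Cartesian square \eqref{defz} the map $\pi\,:\,\PP(Q^\vee)\too X_1$ is a $\PP^1$-bundle because $Q$ has rank $2$; hence its base change $\pi^{}_T\,:\,Z\too T$ is again a $\PP^1$-bundle over the smooth projective curve $T$. The decisive claim is that the restriction of $\varphi$ in \eqref{eq:varphi} to each fiber of $\pi^{}_T$ is a Hecke curve. To verify this, fix $\tau\in T$ and set $x_1=t(\tau)$; under $q$ the fiber $\pi_T^{-1}(\tau)$ is identified with $\pi^{-1}(x_1)=\PP(Q_{x_1}^\vee)$, which is exactly the line inside the full Hecke space $\PP(W_{x_1}^\vee)$ cut out by the one-dimensional quotients of $W_{x_1}$ factoring through $Q_{x_1}$. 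On this fiber the family defining $\varphi$ restricts to the family \eqref{cse} of Hecke transforms of $W$ at the single point $x_1$, tensored by a fixed line bundle $N_\tau$ on $X$ (the restriction of $\SM^{\pm1}$), which is an $r$-th root of $\SO_X(x_1-x_0)$. Since Hecke transformation commutes with tensoring by a line bundle and $(0,\,1)$-stability is preserved by such twists, this is the family of Hecke transforms at $x_1$ of the $(0,\,1)$-stable bundle $W\otimes N_\tau$, which by \eqref{ex} has determinant $\xi(x_0)\otimes N_\tau^{\otimes r}=\xi(x_1)$. Restricting to the line $\PP(Q_{x_1}^\vee)\subset\PP((W\otimes N_\tau)_{x_1}^\vee)$ therefore yields a genuine Hecke curve in $M$, in the sense recalled via \eqref{eq:family}.

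Granting the claim, the hypothesis on $E$ forces $E_Z|_{\pi_T^{-1}(\tau)}$ to be trivial for every $\tau\in T$. Then, exactly as in the descent step of Lemma \ref{lem:vanishpsi}, fiberwise triviality of $E_Z$ on the $\PP^1$-bundle $\pi^{}_T$ implies that $E_T\,:=\,(\pi^{}_T)_*E_Z$ is locally free and that the adjunction morphism $\pi_T^*E_T\too E_Z$ is an isomorphism, since on each fiber it is the evaluation map of a trivial bundle on $\PP^1$, which is an isomorphism. Because $T$ is a smooth projective curve we have $H^4(T,\,\ZZ)=0$, hence $c_2(E_T)=0$ and therefore $c_2(E_Z)=\pi_T^*c_2(E_T)=0$, as desired.

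I expect the main obstacle to be the fiber-is-Hecke-curve claim, whose difficulty is bookkeeping about determinants rather than anything conceptual: one must follow the $r$-th root $\SM$ through the Galois cover \eqref{dl} and check that the twist entering the definition of the family on $X\times Z$ really converts the moving-point transforms into the fixed-point Hecke transforms of a $(0,\,1)$-stable bundle of the correct determinant $\xi(x_1)$, so that the construction summarized by \eqref{eq:family} applies verbatim and each fiber maps isomorphically onto a bona fide Hecke curve, not merely onto some rational curve in $M$.
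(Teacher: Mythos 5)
Your proposal is correct and follows essentially the same route as the paper: use naturality for $c_1$, show the restriction of $\varphi$ to each fiber of the $\PP^1$-bundle $\pi^{}_T\,:\,Z\,\too\,T$ is a Hecke curve so that $E_Z$ is fiberwise trivial, descend $E_Z$ to a bundle on the curve $T$, and conclude $c_2(E_Z)=0$ since $H^4(T,\,\ZZ)=0$. The only difference is that you spell out the determinant bookkeeping showing each fiber really is a Hecke curve (of the $(0,\,1)$--stable bundle $W\otimes N_\tau$ at $x_1$), a verification the paper leaves implicit.
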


\begin{proof}
The vector bundle $E$ on $M$ has $c_1(E)\,=\,0$ (see \eqref{eq:c1}), and hence
$c_1(E_Z)\,=\,0$.

The scheme $Z$ fibers over a curve $\pi_T\,:\,Z\,\too\, T$, and the restriction of
$\varphi$ to any fiber is a Hecke curve (see \eqref{defz}). Hence, the
vector bundle $E_Z$ is trivial on the fibers of $\pi_T$. This implies
that $E_Z$ descends to $T$, i.e., there exists a vector bundle $F$ on $T$
such that $E_Z\,=\,\pi_T^* F$. In fact, $F\,=\, \pi_{T*} E_Z$.

Since $F$ is a vector bundle on a curve, namely $T$, it follows that
$c_2(F)\,=\,0$ (as $H^4(T,\, {\mathbb Z})\,=\, 0$). Therefore, we have
$c_2(E_Z)\,=\,\pi^*_T c_2(F) \,=\,0$.
\end{proof}

\begin{proposition}\label{c2}
The second Chern class $c_2(E)\,\in\, H^4(M, \, {\mathbb Q})$ 
of the vector bundle $E$ on $M$ is zero.
\end{proposition}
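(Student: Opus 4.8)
The plan is to write $c_2(E)$ in the basis of $H^4(M,\QQ)$ furnished by Lemma \ref{newgenerators} and then determine the coefficients by pulling back along the two families $\psi$ and $\varphi$ constructed in the previous two sections. Thus I would set
\[
c_2(E)\,=\,\alpha\,(f_2)^2 + \beta\,b_2 + \gamma\,f_3\, ,
\]
with $\alpha,\beta,\gamma\in\QQ$ (and $\gamma=0$ when $r=2$, since then $f_3=0$). Because Chern classes commute with pullback, the two vanishing statements already proved read $\psi^*c_2(E)=c_2(\psi^*E)=0$ (by Lemma \ref{lem:vanishpsi}, as $\psi^*E$ is trivial) and $\varphi^*c_2(E)=c_2(E_Z)=0$ (by the preceding lemma). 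The remainder of the argument is to check that these two conditions, combined with the pullback formulas \eqref{classesp} and \eqref{classesz}, force $\alpha=\beta=\gamma=0$.

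First I would exploit $\psi$. Since $\psi^*b_2=0$ by \eqref{classesp}, the relation $\psi^*c_2(E)=0$ constrains only $\alpha$ and $\gamma$. Substituting the values of $\psi^*(f_2)^2$ and $\psi^*f_3$ from \eqref{classesp} and comparing coefficients in the standard basis $\{H_1^2,\,H_1H_2,\,H_2^2\}$ of $H^4(\PP_1\times\PP_2,\QQ)$ (for $r\geq 3$; when $r=2$ only $H_1H_2$ is nonzero, as $H_i^2=0$ on $\PP^1$) yields a linear system in $\alpha,\gamma$. The coefficient of $H_1H_2$ together with the common coefficient of $H_1^2$ and $H_2^2$ give two equations whose determinant equals $(1+dd'(r-1))^2$; this is nonzero because $d,d',r-1$ are all positive, so $\alpha=\gamma=0$. (For $r=2$ the single surviving equation already gives $\alpha=0$.)

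It remains to kill $\beta$, and this is precisely where the second family is indispensable: the class $b_2$ is invisible to $\psi$ but visible to $\varphi$. Having reduced to $c_2(E)=\beta\,b_2$, I would pull back by $\varphi$ and invoke \eqref{classesz} to obtain $0=\varphi^*c_2(E)=\beta\cdot 2r^{2g+1}$, whence $\beta=0$ and therefore $c_2(E)=0$. I do not expect a genuine obstacle here: all the delicate Chern-class computations were carried out in the two preceding sections, and the only substantive point left is the nondegeneracy of the linear system, which is guaranteed by the explicit positive coefficients appearing in \eqref{classesp} and \eqref{classesz}.
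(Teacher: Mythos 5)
Your proposal is correct and follows essentially the same route as the paper: expand $c_2(E)$ in the basis of Lemma \ref{newgenerators}, use $\psi^*c_2(E)=0$ together with \eqref{classesp} (and the vanishing $\psi^*b_2=0$) to force $\alpha=\gamma=0$, then use $\varphi^*c_2(E)=0$ together with \eqref{classesz} to force $\beta=0$. The only minor divergence is your justification that $1+dd'(r-1)\neq 0$ via positivity of $d$, which is not assumed in the paper; the paper instead deduces it from the congruence \eqref{eq:degmod}.
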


\begin{proof}
Using Lemma \ref{newgenerators}, we write $c_2(E)$ as a combination of the
generators with coefficients in $\QQ$:
\begin{equation}\label{eq:linearcombination}
c_2(E)\,=\,\alpha (f_2)^2 + \beta b_2 +\gamma f_3\, .
\end{equation}
Consider the pullback of $c_2(E)$ by the morphism $\psi$ in \eqref{eq:psi}.
We have $\psi^*c_2(E)\,=\, 0$ by Lemma \ref{lem:vanishpsi}, and hence using
\eqref{classesp} it follows that
\begin{align}\label{peq}
\psi^*c_2(E)&\,=\, 0\,=\,\\
 & \Big(\alpha(1+dd'(r-1))^2 + \gamma(d'(r-2)+d'{}^2d\frac{(r-1)(r-2)}{2})\Big) (H_1+H_2)^2 - \gamma(H_1^2+H_2^2)\nonumber
\end{align}
It is easy to check, using \eqref{eq:degmod}, that $1+dd'(r-1)\neq 0$.

If rank $r\,=\, 2$, then recall that $f_3\,=\,0$, we set $\gamma\,=\,0$,
and therefore \eqref{peq} gives $\alpha\,=\,0$.

On the other hand, if $r\,>\,2$, then the classes $H_1^2+H_2^2$ and
$(H_1+H_2)^2$ are linearly independent in $H^4(\PP_1\times\PP_2,\, {\mathbb Z})\,\cong\,
H^4(\PP^{r-1}\times\PP^{r-1},\, {\mathbb Z})\,\cong\, \ZZ^3$, so \eqref{peq} 
implies $\alpha\,=\,\gamma\,=\,0$.

Summing up, for any rank $r\,\geq\, 2$ we have
\begin{equation}\label{peq2}
c_2(E)\,=\,\beta b_2\, .
\end{equation}
Pulling back \eqref{peq2} by the map $\varphi$ in \eqref{eq:varphi}, and
using \eqref{classesz} we get that
$$
0 \,= \, \beta r^{2g} 2r\, .
$$
Therefore, $\beta\,=\,0$, and hence $c_2(E)\,=\,0$ by \eqref{peq2}.
\end{proof}

\begin{theorem}\label{thm1}
Let $E$ be a vector bundle on $M\,=\,M_X(r,\xi)$ satisfying the condition that
the restriction of $E$ to every Hecke curve is trivial. Then $E$ is trivial.
\end{theorem}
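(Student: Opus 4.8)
The plan is to combine the two vanishing statements already in hand, namely $c_1(E)=0$ from \eqref{eq:c1} and $c_2(E)=0$ from Proposition \ref{c2}, with a semistability statement for $E$, and then to invoke Simpson's theorem together with the simple connectedness of $M$. Concretely, once we know that $E$ is semistable (with respect to the ample generator $\SO_M(1)$) and that $c_1(E)=0=c_2(E)$ in $H^*(M,\QQ)$, Simpson's theorem provides a flat holomorphic connection on $E$; a flat bundle on a simply connected variety is trivial, and $M=M_X(r,\xi)$ is simply connected, so $E$ is trivial. Thus the only ingredient not yet in place is semistability, which I would establish first.

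To prove that $E$ is semistable I would argue by contradiction, using Hecke curves as test curves. Since $\Pic(M)=\ZZ$ and $c_1(E)=0$, the slope of $E$ is $\mu(E)=0$. Suppose $E$ is not semistable and let $F\subset E$ be its maximal destabilizing subsheaf, so that $F$ is saturated and $\mu(F)>0$; being saturated, $F$ fails to be a subbundle only on a closed subset $S$ of codimension at least two (the non locally free locus of the torsionfree quotient $E/F$). By Lemma \ref{general} a general Hecke curve $\psi\vert_{\PP^1}:\PP^1\too M$ avoids $S$, so along the embedded curve $C:=\psi(\PP^1)$ the sheaf $F$ restricts to a genuine subbundle of $E\vert_C$. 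By hypothesis $E\vert_C$ is trivial, and every subbundle of a trivial bundle on $\PP^1$ has degree at most zero; hence $\deg(F\vert_C)\le 0$. On the other hand $c_1(F)$ is a positive rational multiple of $c_1(\SO_M(1))$ (because $\mu(F)>0$ and $H^2(M,\QQ)$ is one dimensional), while $c_1(\SO_M(1))\cdot[C]=\deg\psi\vert_{\PP^1}=r>0$, so $\deg(F\vert_C)=c_1(F)\cdot[C]>0$. This contradiction shows that $E$ is semistable.

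With semistability established I would finish as follows. By Simpson's theorem a semistable bundle on the smooth projective variety $M$ with $c_1=0=c_2$ carries a flat holomorphic connection, hence corresponds to a representation of $\pi_1(M)$. The moduli space $M=M_X(r,\xi)$ is simply connected, so this representation is trivial and $E\cong\SO_M^{\oplus \rk E}$ is trivial. I expect the genuinely hard part of the whole argument to be the input $c_2(E)=0$, which is already supplied by Proposition \ref{c2}; by comparison the semistability argument above and the appeal to Simpson's theorem are routine, the only points requiring care being the codimension bound that lets Lemma \ref{general} apply and the standard fact that subbundles of a trivial bundle on $\PP^1$ have nonpositive degree.
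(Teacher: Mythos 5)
Your proposal is correct and follows essentially the same route as the paper: both arguments establish semistability by restricting a putative destabilizing subsheaf to a general Hecke curve avoiding its non-locally-free locus (via Lemma \ref{general}), use $\Pic(M)=\ZZ$ to force the restricted subsheaf to have positive degree inside the trivial bundle $E\vert_C$, and then conclude via Simpson's theorem and the simple connectedness of $M$. The only cosmetic difference is that you phrase the destabilizer as the maximal destabilizing (saturated) subsheaf, whereas the paper takes any subsheaf with torsionfree quotient; the content is identical.
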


\begin{proof}
The restriction of $E$ to a Hecke curve on $M$ is trivial. From this
it can be deduced that $E$ is semistable. Indeed, if $E$ is not semistable, there is a
coherent subsheaf
$$
V\, \subset\, E
$$
such that $E/V$ is torsionfree, and
\begin{equation}\label{de}
\frac{\deg (V)}{{\rm rank}(V)} \, >\, \frac{\deg (E)}{{\rm rank}(E)}\,=\, 0
\end{equation}
(see \eqref{eq:c1}). Now, for a general Hecke curve ${\mathbb P}\, \subset\, M$, the
restriction $V\vert_{\mathbb P}$ is torsionfree ($V$ fails to be
locally free in codimension at least 2, so we can apply Lemma \ref{general}). Moreover, for any smooth closed curve $C\,
\subset\, M$, and any torsionfree coherent sheaf $W$ on $M$ which is locally free on $C$,
we have
\begin{equation}\label{dp}
\deg (W\vert_C)\, =\, \deg (C)\cdot \deg (W)\, .
\end{equation}
Indeed, this follows from the facts that
\begin{itemize}
\item $\text{Pic}(M)\,=\, \mathbb Z$ (see \eqref{pm}), and

\item $\deg (C)$ and $\deg (W)$ are computed using the same ample line bundle bundle, namely
the ample generator of $\text{Pic}(M)$.
\end{itemize}
Also, we have $\deg (C) \, >\, 0$ because the restriction, to $C$, of the ample generator of $\text{Pic}(M)$
is ample. Now, from \eqref{de} and \eqref{dp} it is deduced that
$$
\deg (V\vert_{\mathbb P}) \, >\, 0
$$
for any Hecke curve ${\mathbb P}\, \subset\, M$ that is contained in the open subset where
$V$ is locally free. But the trivial bundle $E\vert_{\mathbb P}$ on $\mathbb P$ does not
contain any subsheaf of
positive degree. From this contradiction we conclude that $E$ is semistable.

Since
\begin{itemize}
\item $E$ is semistable,

\item $c_1(E)\,=\, 0$ \eqref{eq:c1} and

\item $c_2(E)\,=\, 0$ (Proposition \ref{c2}),
\end{itemize}
the vector bundle $E$ admits a filtration of subbundles
$$
0\,=\, E_0\, \subset\, E_1\, \subset\, \cdots\, \subset\, E_{\ell-1}\, \subset\, E_\ell\,=\, E
$$
such that for every $1\, \leq\, i\, \leq\, \ell$, the quotient $E_i/E_{i-1}$ is a stable vector
bundle with $c_1(E_i/E_{i-1})\,=\, 0\, =\, c_2(E_i/E_{i-1})$ \cite[p.~39, Theorem 2]{Si}
(note that $\ell\,=\, 1$ is allowed). This implies that $E$ admits a flat holomorphic connection
\cite[p.~40, Corollary 3.10]{Si} (set the Higgs field to be zero in \cite[Corollary 3.10]{Si});
see \cite[p.~4015, Proposition 3.10]{BS} for an extension of this result.

Since $E$ admits a flat holomorphic connection it is given by a representation of
$\pi_1(M)$ in $\text{GL}(r,{\mathbb C})$, where $r$ is the rank of
$E$. On the other hand, $M$ is simply connected \cite[p.~581, Theorem 9.12]{AB}.
Therefore, the vector bundle $E$ is trivial.
\end{proof}

As we explained in the introduction, this result is analogous to the
result that says that a vector bundle $E$ on $\PP^N$ is trivial when it is
trivial when restricted to every line passing through some fixed point
$p\in \PP^N$ \cite[p. 51, Theorem
3.2.1]{OSS}. We will finish with some remarks, comparing our result
to the classical one.

The idea of the proof of in \cite[p. 51, Theorem
3.2.1]{OSS} is the
following. Let $\sigma\,:\,\wt \PP^N\,\too\, \PP^N$ be the blowup of $\PP^N$
at $p$. Note that $E\,=\,E\otimes \sigma_*\SO_{\wt\PP^N}\,=\,\sigma_*\sigma^* E$, 
so it is enough to show that
the pullback $\sigma^* E$ is trivial.
By hypothesis, the pullback $\sigma^* E$ is trivial on the
fibers of the projection of $\wt \PP^N$ to the exceptional divisor
$G$, so it descends to a vector bundle $F$ on $G$. But the restriction
of the blowup $\sigma$ to the exceptional divisor $G$ is constant, so
it follows that $F$ is trivial (with fiber canonically isomorphic to
the fiber of $E$ on $p$). Therefore, the pullback $\sigma^*E$ 
of $E$ to the blowup $\wt\PP^N$ is trivial, and the result is proved.

In this argument, we use the fact that the lines passing through a
point $p$ sweep the whole projective space (in order to prove
$\sigma_* \sigma^*E\,=\,E$).
But the Hecke curves through a fixed point in the moduli space do not
sweep the whole moduli space. This is why we have to use a different
argument to show that $E$ is trivial. 

If we apply our method to the classical setting, we get a slightly
stronger result.

\begin{lemma}\label{strongclassical}
Let $p$ be a point in $\PP^N$, and fix a plane $\PP^2\subset \PP^N$
Suppose that a vector bundle $E$ on 
$\PP^N$ is trivial on every line in $\PP^2$ passing through the point
$p$. Then $E$ is the trivial bundle.
\end{lemma}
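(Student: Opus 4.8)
The plan is to mirror, in the simpler setting of $\PP^N$, the method used to prove Theorem~\ref{thm1}: first force the vanishing of $c_1(E)$ and $c_2(E)$, then establish that $E$ is semistable, and finally invoke Simpson's theorem together with the simple connectedness of the ambient space. The fixed plane $\PP^2$ is the object that plays, all at once, the role of the Hecke families $\psi$ in \eqref{eq:psi} and $\varphi$ in \eqref{eq:varphi}: it is the test variety onto which I restrict $E$ and its Chern classes. The first step is to observe that every line through $p$ lying in $\PP^2$ is a genuine line of the projective plane $\PP^2$, and such lines sweep out all of $\PP^2$; hence the classical criterion \cite[p.~51, Theorem 3.2.1]{OSS}, applied now to $E|_{\PP^2}$ as a bundle \emph{on $\PP^2$}, already shows that $E|_{\PP^2}$ is \emph{trivial}. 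This is the analogue of the descent argument of Lemma~\ref{lem:vanishpsi}.

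Next I would read off the Chern classes of $E$ on $\PP^N$. Since $H^*(\PP^N,\ZZ)$ is generated by the hyperplane class $h$, the restriction homomorphisms $H^2(\PP^N,\ZZ)\to H^2(\PP^2,\ZZ)$ and $H^4(\PP^N,\ZZ)\to H^4(\PP^2,\ZZ)$ are isomorphisms $\ZZ\to\ZZ$ taking generators to generators, namely $h\mapsto h|_{\PP^2}$ and $h^2\mapsto (h|_{\PP^2})^2$. Because $c_i(E)|_{\PP^2}=c_i(E|_{\PP^2})=0$ for $i=1,2$ by triviality of $E|_{\PP^2}$, it follows immediately that $c_1(E)=0$ and $c_2(E)=0$ in $H^*(\PP^N,\ZZ)$. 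This is precisely the counterpart of \eqref{classesp}--\eqref{classesz} and of Proposition~\ref{c2}; it is even more direct here, since $H^4(\PP^N,\QQ)$ has rank one and there is a single coefficient to annihilate.

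To prove semistability I would argue exactly as in the proof of Theorem~\ref{thm1}. If $E$ were not semistable, let $V\subset E$ be the maximal destabilizing subsheaf, which we may take to be reflexive and saturated, so that $E/V$ is torsionfree and $\mu(V)>\mu(E)=0$; thus $\deg(V)>0$, the degree being computed with $\SO_{\PP^N}(1)$. Restricting the inclusion to $\PP^2$, the image of $V|_{\PP^2}$ inside $E|_{\PP^2}$ is a subsheaf of the trivial, hence slope-$0$ semistable, bundle $E|_{\PP^2}$, so it has slope $\le 0$; yet $c_1(V|_{\PP^2})=\deg(V)\,(h|_{\PP^2})$ has positive degree. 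These two statements contradict each other \emph{provided the restriction $V|_{\PP^2}\to E|_{\PP^2}$ is injective}, i.e. provided the torsion kernel $\mathrm{Tor}_1(E/V,\SO_{\PP^2})$, which is supported on $\mathrm{Sing}(E/V)\cap\PP^2$, does not absorb the positive degree. Equivalently, one restricts to a general line $\ell\subset\PP^2$ through $p$: if $\ell$ avoids $\mathrm{Sing}(E/V)$ then $\deg(V|_\ell)=\deg(V)>0$, contradicting that $V|_\ell$ is a subsheaf of the trivial bundle $E|_\ell$.

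The \textbf{main obstacle} is exactly this general-position issue, the classical counterpart of Lemma~\ref{general}. In the moduli setting the Hecke curves attached to varying data sweep enough of $M$ that a general one misses any subset of codimension $\ge 2$; here, however, the lines through $p$ contained in the \emph{fixed} plane $\PP^2$ sweep out only $\PP^2$, so a general such line avoids $\mathrm{Sing}(E/V)$ only when $\mathrm{Sing}(E/V)\cap\PP^2$ is finite. This is automatic for a general plane, and when $N=3$ and $\PP^2$ is a hyperplane it follows cleanly, since a reflexive sheaf restricts to a torsionfree sheaf on a hyperplane section (a depth argument); then $V|_{\PP^2}$ is torsionfree, the torsion kernel vanishes, and the contradiction is immediate. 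The delicate point, which I expect to be the crux, is a special plane of high codimension, where $V|_{\PP^2}$ may acquire torsion and one must bound the degree of that torsion directly. Granting semistability, the proof finishes as in Theorem~\ref{thm1}: by \cite[p.~39, Theorem 2]{Si} and \cite[p.~40, Corollary 3.10]{Si} the semistable bundle $E$ with $c_1(E)=0=c_2(E)$ admits a flat holomorphic connection and so arises from a representation of $\pi_1(\PP^N)$; since $\PP^N$ is simply connected, $E$ is trivial.
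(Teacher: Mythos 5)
Your computation of the Chern classes is correct and in fact shortcuts the paper: you apply \cite[p.~51, Theorem 3.2.1]{OSS} directly to $E|_{\PP^2}$ as a bundle on $\PP^2$ (where the hypothesized lines are \emph{all} lines through $p$), conclude $E|_{\PP^2}$ is trivial, and read off $c_1(E)=c_2(E)=0$ from the fact that restriction $H^{2i}(\PP^N,\ZZ)\to H^{2i}(\PP^2,\ZZ)$ is an isomorphism for $i=1,2$. The paper instead reproves the relevant part of the OSS argument by blowing up $\PP^2$ at $p$ and descending to the exceptional divisor; the two routes are equivalent and yours is cleaner. The final step (Simpson plus simple connectedness of $\PP^N$) matches the paper.

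However, there is a genuine gap at the semistability step, and you have correctly diagnosed it yourself without closing it. Confining the destabilizing subsheaf $V$ to the fixed plane $\PP^2$ cannot work: $\mathrm{Sing}(E/V)$ has codimension $\geq 2$ in $\PP^N$ but may well contain a curve inside the given $\PP^2$, in which case \emph{every} line of $\PP^2$ through $p$ meets it, $V|_{\PP^2}$ acquires torsion, and the comparison between $\deg(V)$ and $\deg(V|_\ell)$ breaks down. "This is automatic for a general plane" does not help because the plane is part of the data. The missing idea is to leave the plane: the paper exploits the large automorphism group of $\PP^N$, replacing $E$ by $\varphi^*E$ (which is semistable iff $E$ is) so that the bad locus is moved off the test line; equivalently, one observes that the set of lines $\ell\subset\PP^N$ with $E|_\ell$ trivial is Zariski open (upper semicontinuity of $h^0(E(-1)|_\ell)$, together with $c_1(E)\cdot\ell=0$) and nonempty, hence contains a \emph{general} line of $\PP^N$, and a general line avoids the codimension-$\geq 2$ locus $\mathrm{Sing}(E/V)$ outright. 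With that observation the inequality $\deg(V)=\deg(V|_\ell)\leq 0$ goes through and the rest of your argument is fine; without it, the proof is incomplete.
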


\begin{proof}
It clear that $c_1(E)\,=\,0$, because the restriction to a line is
trivial.

Let $\PP^1$ be a line in $\PP^2$ containing $p$. 
Let $S$ be a subsheaf of $E$ of rank $\rk S\,<\,\rk E$. 
If $Z(E)$ is the subset where $E$ is not locally free, then there is an
automorphism $\varphi$ of $\PP^N$ such that $\PP^1$ is disjoint to
$Z(\varphi^*(E))\,=\,f^{-1}(E)$. Since $E$ is unstable if and only if
$\varphi^*(E)$ is unstable, we may assume that $Z(E)$ is disjoint with
$\PP^1$, and then 
$$
\deg(S)\,=\,\deg(S|_{\PP^1})\,\leq\, 0
$$
where the last equality follows from the fact that $E|_{\PP^1}$ is
trivial, and hence semistable. This proves that $E$ is semistable.

We will now show that $c_2(E)\,=\,0$. It is enough to show that
the restriction $E'\,=\,E|_{\PP^2}$ has $c_2(E')\,=\,0$.
Now let $\sigma\,:\,\wt\PP^2\,\too\,\PP^2$ be the blowup of $\PP^2$ at a
point. To show that $c_2(E')\,=\,0$, it is enough to show $c_2(\sigma^*
E')\,=\,0$.
By hypothesis, the restriction of $\sigma^* E'$ to the fiber of
the projection of $\wt\PP^2$ to the exceptional divisor $G$ are
trivial, so $E'$ is the pullback of a vector bundle $F$ on $G$. But
the dimension of $G$ is one, so $c_2(F)\,=\,0$, and also $c_2(E')\,=\,0$.

Summing up, $E$ is semistable, $c_1(E)\,=\,0$ and $c_2(E)\,=\,0$, so apply
Simpson's results as in the proof of Theorem \ref{thm1}.
\end{proof}

In the case of $\PP^N$ it is enough to check with the lines passing
through a fixed point of $\PP^N$, so it is natural to ask if something
similar holds for the moduli space. Our proof would not work if we
only consider Hecke curves through a fixed point: when we prove
semistability of the vector bundle $E$, we consider a potentially
destabilizing subsheaf $V$, and we restrict to a Hecke curve where $V$
is locally free, but $V$ might not be locally free on the chosen point
$p$. In the proof of Lemma \ref{strongclassical} this is solved by
using the large automorphism group of $\PP^N$, but this trick does not
seem possible in the case of the moduli space, whose automorphism is
very small. Another point where our proof would fail is Lemma
\ref{lem:vanishpsi} we use Hecke curves which do not intersect.

\section*{Acknowledgements}

The comments of the referee led us to a gap in a previous version, which has
been corrected. We are very grateful to the referee for this.
The second author thanks Roberto Mu\~noz for discussions. 
Part of this work was done during visits to the
Kerala School of Mathematics, Tata Institute of Fundamental
Research and to the
International Center for Theoretical Sciences (ICTS, during the
program Moduli of Bundles and Related Structures, code: 
ICTS/mbrs2020/02).

The first-named author is partially supported by a J. C. Bose Fellowship, and
school of mathematics, TIFR, is supported by 12-R$\&$D-TFR-5.01-0500.
The second-named author is supported by
Ministerio de Ciencia e Innovaci\'on of Spain
(grant MTM2016-79400-P, and 
ICMAT Severo Ochoa project SEV-2015-0554).

%%%%%%%%%%%%%%%%%%%%%%%%%%%%%%%%%%%%%%%%%%%%%%%%%%%%%%%%%%%%%%%%%%%%%%%%%%%%%%%%%%%%%%%%%%

\end{document}